\newtheorem{thm}{Theorem}[section]
\newtheorem{cor}[thm]{Corollary}
\newtheorem{lem}[thm]{Lemma}
\journal{}
\begin{document}
\begin{spacing}{1.15}
\begin{frontmatter}
\title{\textbf{On the spectral radius of nonregular uniform hypergraphs}}

\author[label1,label2]{Jiang Zhou}\ead{zhoujiang04113112@163.com}
\author[label3]{Lizhu Sun}
\author[label1]{Changjiang Bu}

%% use optional labels to link authors explicitly to addresses:
%% \author[label1,label2]{<author name>}
%% \address[label1]{<address>}
%% \address[label2]{<address>}

\address{
\address[label1]{College of Science, Harbin Engineering University, Harbin 150001, PR China}
\address[label2]{College of Computer Science and Technology, Harbin Engineering University, Harbin 150001, PR China}
\address[label3]{School of Science, Harbin Institute of Technology, Harbin 150001, PR China}

}

\begin{abstract}
Let $G$ be a connected uniform hypergraphs with maximum degree $\Delta$, spectral radius $\lambda$ and minimum H-eigenvalue $\mu$. In this paper, we give some lower bounds for $\Delta-\lambda$, which extend the result of [S.M. Cioab\u{a}, D.A. Gregory, V. Nikiforov, Extreme eigenvalues of nonregular graphs, J. Combin. Theory, Ser. B 97 (2007) 483-486] to hypergraphs. Applying these bounds, we also obtain a lower bound for $\Delta+\mu$.
\end{abstract}

\begin{keyword}
Spectral radius, Hypergraph, Adjacency tensor, Eigenvalue\\
\emph{AMS classification:} 05C65, 05C50, 15A69, 15A18
\end{keyword}
\end{frontmatter}

\section{Introduction}
Spectral graph theory has been widely studied, and has many important applications in combinatorics, computer science, physics and so on. It is natural to generalize spectral theory to hypergraphs. Recently, there have been many attempts to develop spectral hypergraph theory based on eigenvalues of tensors [2,5-7,10-13,18,20].

An order $m$ dimension $n$ tensor $\mathcal{A}=(a_{i_1i_2\ldots i_m})$ is a multidimensional array with $n^m$ entries ($i_j\in\{1,\ldots,n\},j=1,\ldots,m$). $\mathcal{A}$ is called \textit{symmetric} if $a_{i_1i_2\cdots i_k}=a_{i_{\sigma(1)}i_{\sigma(2)}\cdots i_{\sigma(k)}}$ for any permutation $\sigma$ on $\{1,\ldots,k\}$. In 2005, the concept of eigenvalues of tensors was defined by Qi \cite{Qi 2005} and Lim \cite{Lim 2005}. For $\mathcal{A}=(a_{i_1i_2\cdots i_m})\in\mathbb{C}^{n\times n\times\cdots\times n}$ and $x=\left({x_1 ,\ldots ,x_n}\right)^\mathrm{T}\in\mathbb{C}^n,$ $\mathcal{A}x^{m-1}$ is a vector in $\mathbb{C}^n$ whose $i$-th component is
\begin{align*}
(\mathcal{A}x^{m-1})_i=\sum\limits_{i_2,\ldots,i_m=1}^na_{ii_2\cdots i_m}x_{i_2}\cdots x_{i_m}.
\end{align*}
A number $\lambda\in\mathbb{C}$ is called an \textit{eigenvalue} of $\mathcal{A}$, if there exists a nonzero vector $x\in\mathbb{C}^n$ such that $\mathcal{A}x^{m-1}=\lambda x^{[m-1]}$, where $x^{\left[ {m - 1} \right]}  = \left( {x_1^{m - 1},\ldots,x_n^{m - 1} } \right)^\top$. In this case, $x$ is an \textit{eigenvector} of $\mathcal{A}$ associated with $\lambda$. If $\lambda$ is a real eigenvalue with a real eigenvector, then $\lambda$ is called an \textit{H-eigenvalue} of $\mathcal{A}$. The \textit{spectral radius} of $\mathcal{A}$ is defined as $\rho(\mathcal{A})=\max\{|\lambda|:\lambda\in\sigma(\mathcal{A})\}$, where $\sigma(\mathcal{A})$ is the set of all eigenvalues of $\mathcal{A}$.

Let $V(G)$ and $E(G)$ denote the vertex set and edge set of a hypergraph $G$, respectively. If each edge of $G$ contains exactly $k$ distinct vertices, then $G$ is called $k$-\textit{uniform}. We will use the term \textit{$k$-graph} in place of $k$-uniform hypergraph. Clearly, a $2$-graph is a simple undirected graph. The \textit{degree} of a vertex $u$ of $G$ is the number of edges containing $u$. If all vertices of $G$ have the same degree, then $G$ is called \textit{regular}. In a $k$-graph $G$, a \textit{path of length $l$} is defined to be an alternating sequence of vertices and edges $u_1,e_1,u_2,\ldots,u_l,e_l,u_{l+1}$, where $u_1,\ldots,u_{l+1}$ are distinct vertices of $G$, $e_1,\ldots,e_l$ are distinct edges of $G$ and $u_i,u_{i+1}\in e_i$ for $i=1,\ldots,l$. If there exists a path between any two vertices of $G$, then $G$ is called \textit{connected}. The distance between two vertices is the length of the shortest path connecting them. The \textit{diameter} of a connected $k$-graph $G$ is the maximum distance among all vertices of $G$.

The \textit{adjacency tensor} \cite{Cooper12} of a $k$-graph $G$ with $n$ vertices, denoted by $\mathcal{A}_G$, is an order $k$ dimension $n$ symmetric tensor with entries
\begin{eqnarray*}
a_{i_1i_2\cdots i_k}=\begin{cases}\frac{1}{(k-1)!}~~~~~~~\mbox{if}~i_1i_2\cdots i_k\in E(G),\\
0~~~~~~~~~~~~~\mbox{otherwise}.\end{cases}
\end{eqnarray*}
Eigenvalues (H-eigenvalues) of $\mathcal{A}_G$ are called eigenvalues (H-eigenvalues) of $G$. Let $\mu(G)$ denote the minimum H-eigenvalue of $G$. The spectral radius of $\mathcal{A}_G$ is called the spectral radius of $G$, denoted by $\rho(G)$. When $k=2$, $\mathcal{A}_G$ is the adjacency matrix of $2$-graph $G$.

For a connected $k$-graph $G$ with maximum degree $\Delta$, Cooper and Dutle \cite{Cooper12} proved that $\rho(G)\leqslant\Delta$, with equality if and only if $G$ is regular. It is natural to consider how small $\Delta-\rho(G)$ can be for nonregular $k$-graphs. When $G$ is nonregular $2$-graph, some lower bounds on $\Delta-\rho(G)$ are given in \cite{Cioaba0,Cioaba,Liu,Stevanovic,Zhang}. These bounds are also lower bounds of $\Delta+\mu(G)$ because $\Delta+\mu(G)\geqslant\Delta-\rho(G)$.

Alon and Sudakov \cite{Alon} proved that if $G$ is a connected nonbipartite (possibly regular) $2$-graph with $n$ vertices, then
\begin{eqnarray*}
\Delta+\mu(G)\geqslant\frac{1}{n(D+1)},
\end{eqnarray*}
where $D$ is the diameter of $G$. For a connected nonregular $2$-graph $G$ with $n$ vertices and $m$ edges, Cioab\u{a}, Gregory and Nikiforov \cite{Cioaba} obtain the following bound
\begin{align}
\Delta-\rho(G)>\frac{n\Delta-2m}{n(D(n\Delta-2m)+1)}.\tag{1.1}
\end{align}
In this paper, we give some lower bounds on $\Delta-\rho(G)$ for a connected nonregular $k$-graph $G$, which extend the inequality (1.1) to hypergraphs. For a connected non-odd-bipartite $k$-graph $G$, we show that $\Delta+\mu(G)>\frac{1}{3n}$ when $k\geqslant4$ is even.
\section{Preliminaries}
In this section, we collect some helpful lemmas.
\begin{lem}\textup{\cite{ChangAn}}\label{lem2.1}
Let $a_1,\ldots,a_n$ be nonnegative real numbers. Then
\begin{eqnarray*}
\frac{a_1+\cdots+a_n}{n}-(a_1\cdots a_n)^{\frac{1}{n}}\geqslant\frac{1}{n(n-1)}\sum_{1\leqslant i<j\leqslant n}(\sqrt{a_i}-\sqrt{a_j})^2.
\end{eqnarray*}
\end{lem}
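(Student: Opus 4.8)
The plan is to reduce this refined AM--GM inequality to the ordinary AM--GM inequality after a change of variables. First I would set $b_i=\sqrt{a_i}\geqslant0$, so that $a_i=b_i^2$ and the geometric mean becomes $(a_1\cdots a_n)^{1/n}=(b_1\cdots b_n)^{2/n}$. Writing $S_1=\sum_{i=1}^n b_i$ and $S_2=\sum_{i=1}^n b_i^2$, the first key identity is
\[
\sum_{1\leqslant i<j\leqslant n}(\sqrt{a_i}-\sqrt{a_j})^2=\sum_{1\leqslant i<j\leqslant n}(b_i-b_j)^2=nS_2-S_1^2,
\]
which follows by expanding the square and counting that each $b_i^2$ occurs in $n-1$ pairs while the cross terms assemble into $S_1^2-S_2$.

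Next I would compute the difference of the two sides in these variables. The left-hand side equals $\frac{S_2}{n}-(b_1\cdots b_n)^{2/n}$ and the right-hand side equals $\frac{nS_2-S_1^2}{n(n-1)}$, so a short calculation collapses the terms in $S_2$ and yields
\[
\left(\frac{a_1+\cdots+a_n}{n}-(a_1\cdots a_n)^{\frac{1}{n}}\right)-\frac{nS_2-S_1^2}{n(n-1)}=\frac{S_1^2-S_2}{n(n-1)}-(b_1\cdots b_n)^{2/n}.
\]
Using $S_1^2-S_2=2\sum_{i<j}b_ib_j$ together with $n(n-1)=2\binom{n}{2}$, this difference is nonnegative precisely when
\[
\frac{1}{\binom{n}{2}}\sum_{1\leqslant i<j\leqslant n}b_ib_j\geqslant(b_1\cdots b_n)^{2/n}.
\]

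Finally I would recognize the last display as the ordinary AM--GM inequality applied to the $\binom{n}{2}$ nonnegative numbers $\{b_ib_j\}_{i<j}$. Their product is $\prod_{i<j}b_ib_j=(b_1\cdots b_n)^{n-1}$, since each factor $b_i$ appears in exactly $n-1$ of the pairs, so the geometric mean of these $\binom{n}{2}$ numbers is $\big((b_1\cdots b_n)^{n-1}\big)^{1/\binom{n}{2}}=(b_1\cdots b_n)^{2/n}$, matching the right-hand side exactly and closing the argument. I expect the only real obstacle to be the bookkeeping: getting the two counting identities (for $\sum_{i<j}(b_i-b_j)^2$ and for $\prod_{i<j}b_ib_j$) correct and checking that the exponent $(n-1)/\binom{n}{2}$ simplifies to $2/n$; once these are verified, the result is immediate from classical AM--GM.
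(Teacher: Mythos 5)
The paper does not prove this lemma---it is quoted directly from the cited reference [2]---so there is no in-paper argument to compare against. Your proof is correct and self-contained: the identity $\sum_{i<j}(b_i-b_j)^2=nS_2-S_1^2$, the cancellation leaving $\frac{1}{\binom{n}{2}}\sum_{i<j}b_ib_j-(b_1\cdots b_n)^{2/n}$, and the product count $\prod_{i<j}b_ib_j=(b_1\cdots b_n)^{n-1}$ with exponent $(n-1)/\binom{n}{2}=2/n$ all check out, and the final step is a legitimate application of ordinary AM--GM to the $\binom{n}{2}$ pairwise products. The only (trivial) caveat is that the statement implicitly assumes $n\geqslant 2$, since the right-hand side has $n(n-1)$ in the denominator.
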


\begin{lem}\label{lem2.2}
Let $a,b,y_1,y_2$ be positive numbers. Then $a(y_1-y_2)^2+by_2^2\geqslant\frac{ab}{a+b}y_1^2$.
\end{lem}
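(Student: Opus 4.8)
The plan is to reduce the claimed inequality to a nonnegativity statement that is manifestly true, namely that a certain quadratic form is a perfect square. First I would move everything to one side and expand, writing the left-hand side as
\[
a(y_1-y_2)^2 + by_2^2 = ay_1^2 - 2ay_1y_2 + (a+b)y_2^2.
\]
Subtracting $\frac{ab}{a+b}y_1^2$ and using $a - \frac{ab}{a+b} = \frac{a^2}{a+b}$, the difference between the two sides becomes $\frac{a^2}{a+b}y_1^2 - 2ay_1y_2 + (a+b)y_2^2$, so it suffices to show that this expression is nonnegative.

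The key step is to recognize this quadratic in $y_1,y_2$ as a complete square. Indeed,
\[
\frac{a^2}{a+b}y_1^2 - 2ay_1y_2 + (a+b)y_2^2 = \left(\frac{a}{\sqrt{a+b}}\,y_1 - \sqrt{a+b}\,y_2\right)^2 \geqslant 0,
\]
which yields the inequality immediately, with equality precisely when $y_2 = \frac{a}{a+b}y_1$. Since $a,b>0$, the coefficients $\frac{a}{\sqrt{a+b}}$ and $\sqrt{a+b}$ are well defined, so no sign difficulties arise.

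There is essentially no serious obstacle here: once the term $\frac{ab}{a+b}y_1^2$ is subtracted, the entire content is the completing-the-square identity above, whose verification is routine. An even shorter route is to apply the Cauchy--Schwarz inequality to the decomposition $y_1=(y_1-y_2)+y_2$: writing $y_1 = \sqrt{a}\,(y_1-y_2)\cdot\tfrac{1}{\sqrt{a}} + \sqrt{b}\,y_2\cdot\tfrac{1}{\sqrt{b}}$ and applying Cauchy--Schwarz gives $y_1^2 \leqslant \left(a(y_1-y_2)^2 + by_2^2\right)\!\left(\tfrac{1}{a}+\tfrac{1}{b}\right)$, and rearranging via $\tfrac{1}{a}+\tfrac{1}{b}=\tfrac{a+b}{ab}$ gives the claim at once. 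I would present the sum-of-squares version, since it is fully self-contained and also identifies the equality case, which is likely to matter when the lemma is invoked in the main estimates.
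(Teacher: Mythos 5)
Your argument is correct and is essentially the paper's own proof: the perfect square $\bigl(\tfrac{a}{\sqrt{a+b}}\,y_1-\sqrt{a+b}\,y_2\bigr)^2$ you exhibit is exactly the paper's completed-square term $(a+b)\bigl(y_2-\tfrac{ay_1}{a+b}\bigr)^2$, just written with the factor absorbed into the square. The Cauchy--Schwarz alternative you mention is a valid bonus, but the main route coincides with the paper's.
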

\begin{proof}
By computation, we have
\begin{eqnarray*}
a(y_1-y_2)^2+by_2^2=(a+b)(y_2-\frac{ay_1}{a+b})^2+\frac{ab}{a+b}y_1^2\geqslant\frac{ab}{a+b}y_1^2.
\end{eqnarray*}
\end{proof}
A hypergraph $G$ is called \textit{$f$-edge-connected} if $G-U$ is connected for any edge subset $U\subseteq E(G)$ satisfying $|U|<f$. Two paths are called \textit{edge-disjoint} if they does not have a common edge. The following is the Menger's theorem for hypergraphs.
\begin{lem}\textup{\cite{Zykov}}\label{lem2.3}
A hypergraph $G$ is $f$-edge-connected if and only if there are $f$ mutual edge-disjoint paths between each pair of vertices.
\end{lem}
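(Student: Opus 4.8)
The plan is to recognize this as the edge-version of Menger's theorem transplanted to hypergraphs, and to reduce it to the classical graph statement via the incidence (K\"onig) representation. It suffices to prove the local form: for a fixed pair of vertices $s,t$, the maximum number of pairwise edge-disjoint $s$--$t$ paths equals $\lambda(s,t)$, the minimum number of edges whose deletion separates $s$ from $t$. Indeed, $G$ is $f$-edge-connected exactly when $\lambda(s,t)\geqslant f$ for every pair $s,t$, so the lemma follows by applying the local form to each pair.

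The easy direction ($\Leftarrow$) is a pigeonhole argument: if there are $f$ pairwise edge-disjoint $s$--$t$ paths, then any edge set $U$ with $|U|<f$ can meet at most $|U|<f$ of them, since each edge lies in at most one of the edge-disjoint paths; hence at least one path survives in $G-U$ and $s,t$ remain connected. As this holds for every pair of vertices, $G-U$ is connected.

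For the hard direction ($\Rightarrow$) I would build an auxiliary flow network $N$ from the incidence graph of $G$. Replace each hyperedge $e$ by a vertex $w_e$ joined to every vertex it contains; a hypergraph path $u_1,e_1,\ldots,u_{l+1}$ then becomes the walk $u_1,w_{e_1},u_2,\ldots,u_{l+1}$, and two hypergraph paths are edge-disjoint precisely when the corresponding walks share no vertex $w_e$. To force cuts to consist only of hyperedge-vertices, split each $w_e$ into an arc $w_e^{\mathrm{in}}\to w_e^{\mathrm{out}}$ of capacity $1$ and give every original vertex infinite capacity. Then an integral $s$--$t$ flow of value $r$ decomposes into $r$ paths that are vertex-disjoint on the $w_e$-vertices, that is, $r$ edge-disjoint $s$--$t$ paths in $G$; a finite minimum cut uses only unit-capacity arcs $w_e^{\mathrm{in}}\to w_e^{\mathrm{out}}$ and hence corresponds to a set of $\lambda(s,t)$ hyperedges separating $s$ from $t$. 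The integral max-flow--min-cut theorem then yields that the maximum number of edge-disjoint paths equals $\lambda(s,t)$.

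The main obstacle is this last correspondence. One must verify that the infinite capacities on the original vertices genuinely prevent them from appearing in a minimum cut, so that every finite cut of $N$ is exactly a hyperedge-separator of $G$, and conversely that each hyperedge-separator induces a cut of $N$ of the same size; combined with the integrality of the flow (which guarantees an honest decomposition into edge-disjoint paths rather than a fractional one), this makes the two invariants coincide. Once the translation is checked, the classical theorem finishes the proof.
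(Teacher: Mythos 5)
The paper does not actually prove this lemma: it is quoted as a known result (Menger's theorem for hypergraphs) with a bare citation to Zykov's survey, so there is no in-paper argument to compare yours against. On its own merits your proposal is a correct and essentially standard proof. The easy direction is exactly the pigeonhole argument one expects, and your reduction of the lemma to the local max-flow/min-cut statement for each pair $s,t$ is the right framing of ``$f$-edge-connected'' as $\lambda(s,t)\geqslant f$ for all pairs. For the hard direction, the split-incidence-network construction is sound, and the correspondences you flag as needing verification do all check out: the incidence graph is bipartite, so every simple $s$--$t$ path in the network alternates vertex-nodes and hyperedge-nodes and translates back precisely to a path in the paper's sense (distinct vertices, distinct edges, consecutive vertices contained in the intervening hyperedge); the unit capacities on the arcs $w_e^{\mathrm{in}}\to w_e^{\mathrm{out}}$ force the paths in an integral flow decomposition to use pairwise distinct hyperedges while still being allowed to share ordinary vertices, which is exactly edge-disjointness; and a finite minimum cut exists (cutting every hyperedge arc is a finite cut), hence consists only of hyperedge arcs and yields a hyperedge separator of the same cardinality, with the converse inclusion being immediate. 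An alternative route would be to run the classical inductive or augmenting-path proof of Menger directly on the hypergraph, but your reduction buys the theorem from the off-the-shelf integral max-flow--min-cut theorem at the cost of only this bookkeeping.
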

Let $\lambda$ be an eigenvalue of a $k$-graph $G$ with eigenvector $x$. Since $\mathcal{A}_Gx^{k-1}=\lambda x^{[k-1]}$, we know that $cx$ is also an eigenvector of $\lambda$ for any nonzero constant $c$. So we can choose $x$ such that $\sum_{i=1}^nx_i^k=1$. In this case, we have \cite{Cooper12,LiShaoQi}
\begin{align}
\lambda=x^\top(\mathcal{A}_Gx^{k-1})=k\sum_{e\in E(G)}x^e,\tag{2.1}
\end{align}
where $x^e=x_{i_1}x_{i_2}\cdots x_{i_k}$, $i_1,i_2,\ldots,i_k$ are $k$ vertices in the edge $e$. From the proof of [14, Theorem 5], we obtain the following lemma.
\begin{lem}\label{lem2.4}
Let $G$ be a connected $k$-graph with $n$ vertices and $k$ is even. Then
\begin{eqnarray*}
\mu(G)=\min_{x\in\mathbb{R}^n,\sum_{i=1}^nx_i^k=1}x^\top(\mathcal{A}_Gx^{k-1}).
\end{eqnarray*}
\end{lem}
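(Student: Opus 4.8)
The plan is to realize $\mu(G)$ as the minimum of the continuous function $f(x)=x^\top(\mathcal{A}_Gx^{k-1})$ over the constraint set $S=\{x\in\mathbb{R}^n:\sum_{i=1}^nx_i^k=1\}$, and to show this minimum is attained at a genuine H-eigenvector. The first step is to observe that, because $k$ is even, every $x_i^k\geqslant0$, so $x\in S$ forces $|x_i|\leqslant1$; thus $S$ is a bounded, closed, nonempty subset of $\mathbb{R}^n$, hence compact. Since $f$ is a polynomial it is continuous, so it attains its minimum on $S$ at some point $x^\ast$; write $\lambda_0=f(x^\ast)=\min_{x\in S}f(x)$.

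Next I would apply Lagrange multipliers at $x^\ast$. As $x^\ast\in S$ is nonzero, the constraint gradient $k(x^\ast)^{[k-1]}$ is nonzero, so $x^\ast$ is a regular point; using the symmetry of $\mathcal{A}_G$ one has $\nabla f=k\,\mathcal{A}_Gx^{k-1}$ and $\nabla(\sum_ix_i^k)=kx^{[k-1]}$, so the stationarity condition reads $\mathcal{A}_G(x^\ast)^{k-1}=\lambda(x^\ast)^{[k-1]}$ for some real $\lambda$. Hence $\lambda$ is an H-eigenvalue with real eigenvector $x^\ast$, and contracting this identity with $x^\ast$ together with $\sum_i(x_i^\ast)^k=1$ shows (exactly as in (2.1)) that $\lambda=f(x^\ast)=\lambda_0$. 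In particular $\lambda_0$ is an H-eigenvalue, so $\lambda_0\geqslant\mu(G)$ by definition of the minimum H-eigenvalue.

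For the reverse inequality I would start from a real eigenvector $y$ for $\mu(G)$. Since $k$ is even and $y\neq0$, we have $\sum_iy_i^k>0$, and scaling $y$ (which preserves the eigenvector property, as noted after (2.1)) by the factor $(\sum_iy_i^k)^{-1/k}$ puts $y$ into $S$ while keeping its eigenvalue equal to $\mu(G)$. Evaluating $f$ at this normalized vector gives $f(y)=\mu(G)$, so $\lambda_0=\min_{x\in S}f(x)\leqslant f(y)=\mu(G)$. Combining the two inequalities yields $\mu(G)=\lambda_0=\min_{x\in S}f(x)$, as claimed.

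The individual steps are routine; the real work is carried by the hypothesis that $k$ is even, which is used twice and is genuinely necessary. It guarantees compactness of $S$ in the first step (for odd $k$ the surface $\sum_ix_i^k=1$ is unbounded and $f$ need not attain a minimum), and it guarantees $\sum_iy_i^k>0$ in the last step, so that the minimum H-eigenvector can be normalized into $S$ without altering its eigenvalue. The Lagrange-multiplier computation is the only place where care with the symmetry of $\mathcal{A}_G$ is needed, but it is standard; I expect the main obstacle to be conceptual rather than computational, namely recognizing precisely where evenness of $k$ is indispensable.
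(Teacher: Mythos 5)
Your proof is correct and follows essentially the same route as the paper, which does not write out an argument but defers to the proof of [14, Theorem 5] (Qi's existence theorem for H-eigenvalues of even-order symmetric tensors): compactness of the level set $\sum_i x_i^k=1$ for even $k$, a Lagrange-multiplier argument showing the minimizer is an H-eigenvector with eigenvalue equal to the minimum value, and normalization of an eigenvector for $\mu(G)$ for the reverse inequality. Your identification of exactly where evenness of $k$ is needed matches the role it plays in that cited proof.
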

\section{Main results}
Let $G$ be a connected $k$-graph with $n$ vertices and $m$ edges, and let $\Delta$ and $D$ be the maximum degree and the diameter of $G$, respectively. We give some lower bounds for $\Delta-\rho(G)$ as follows.
\begin{thm}
Let $G$ be a nonregular connected $k$-graph. Then
\begin{eqnarray*}
\Delta-\rho(G)>\frac{k(n\Delta-km)}{n\left(2(k-1)D(n\Delta-km)+k\right)}.
\end{eqnarray*}
Moreover, the following statements hold:\\
(1) If $k\geqslant5$ and $G$ is $f$-edge-connected, then
\begin{eqnarray*}
\Delta-\rho(G)>\frac{fk(n\Delta-km)}{n[2(k-1)(n\Delta-km)+fk]}.
\end{eqnarray*}\\
(2) If $k=4$ and $G$ is $f$-edge-connected, then
\begin{eqnarray*}
\Delta-\rho(G)>\frac{f(n\Delta-4m)}{n[2(n\Delta-4m)+f]}.
\end{eqnarray*}
\end{thm}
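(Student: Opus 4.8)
The plan is to work with a positive eigenvector $x$ for $\rho=\rho(G)$ (which exists because $G$ is connected), normalized so that $\sum_{i=1}^n x_i^k=1$, and to set $y_i=x_i^{k/2}$, so that $\sum_i y_i^2=1$. Starting from the per-vertex eigenequation $\rho x_i^{k-1}=\sum_{e\ni i}\prod_{j\in e,\,j\ne i}x_j$, i.e.\ $\rho x_i^k=\sum_{e\ni i}x^e$, and using $\sum_i d_i x_i^k=\sum_e\sum_{i\in e}x_i^k$, I would first record the identity
\begin{equation*}
\Delta-\rho=\sum_{i=1}^n(\Delta-d_i)y_i^2+\sum_{e\in E(G)}\Big(\sum_{i\in e}x_i^k-k\,x^e\Big),
\end{equation*}
in which every summand is nonnegative (the edge terms by AM--GM).

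For the general bound I would squeeze the edge terms with Lemma~\ref{lem2.1} applied to the $k$ numbers $x_i^{k}$ $(i\in e)$, giving $\sum_{i\in e}x_i^k-k\,x^e\ge\frac{1}{k-1}\sum_{p<q\in e}(y_p-y_q)^2$. Since $\sum_{p<q\in e}(y_p-y_q)^2=k\sum_{i\in e}(y_i-\bar y_e)^2\ge\frac{k}{2}(y_a-y_b)^2$ for any two $a,b\in e$, each edge lying on a path contributes at least $\frac{k}{2(k-1)}$ times the squared difference of two consecutive path vertices. Taking a shortest path (length $\ell\le D$) from a vertex realizing $y_{\min}$ to one realizing $y_{\max}=\max_i x_i^{k/2}$ and applying Cauchy--Schwarz along it yields $\sum_e(\cdots)\ge\frac{k}{2(k-1)D}(y_{\max}-y_{\min})^2$, while $\sum_i(\Delta-d_i)y_i^2\ge(n\Delta-km)\,y_{\min}^2$ because $\sum_i(\Delta-d_i)=n\Delta-km$. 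I would then invoke Lemma~\ref{lem2.2} with $a=\frac{k}{2(k-1)D}$, $b=n\Delta-km$, $y_1=y_{\max}$, $y_2=y_{\min}$ to get $\Delta-\rho\ge\frac{ab}{a+b}y_{\max}^2$, and finish with $y_{\max}^2=\max_i x_i^k\ge\frac1n$; a short computation turns $\frac{ab}{a+b}\cdot\frac1n$ into the displayed fraction. Strictness follows from nonregularity: equality $y_{\max}^2=1/n$ would force all $x_i$ equal, hence all $d_i$ equal by the eigenequation, contradicting that $G$ is nonregular.

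For (1) and (2) the only change is in how the edge terms are bounded below: instead of one shortest path I would use Menger's theorem (Lemma~\ref{lem2.3}) to select $f$ pairwise edge-disjoint paths joining the $y_{\min}$-vertex to the $y_{\max}$-vertex. As these share no edge, the contributions $\frac{k}{2(k-1)}\sum_{\mathrm{edges}}(y_\cdot-y_\cdot)^2$ accumulate independently, which is what should manufacture the factor $f$ (the ``$f$ parallel conductances'' picture), and this is where $k\ge4$ enters: an edge on $k\ge5$ vertices has spare vertices to carry the disjoint path-segments cleanly, whereas $k=4$ is borderline and forces the weaker constant appearing in (2). Combining the resulting estimate with $\sum_i(\Delta-d_i)y_i^2\ge(n\Delta-km)y_{\min}^2$ through Lemma~\ref{lem2.2}, now with $a=\frac{fk}{2(k-1)}$ for (1) and $a=\frac f2$ for the $k=4$ case, and using $y_{\max}^2\ge\frac1n$, should reproduce the two stated bounds.

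The main obstacle I anticipate is precisely in (1)--(2): a naive Cauchy--Schwarz along each of the $f$ paths loses a factor equal to the path length, which would leave a residual diameter $D$ in the denominator rather than the clean $D$-free form claimed. Eliminating this loss is the crux, and I expect it to require using the eigenequation locally --- the entries of $x$ decay geometrically away from the $y_{\max}$-vertex, so the per-edge variance should stay of constant order along each path instead of shrinking like $1/\ell$ --- together with a careful count of the at most $\lfloor k/2\rfloor$ disjoint vertex-pairs available inside each edge, which is exactly what should separate the $k\ge5$ case from the tight $k=4$ case.
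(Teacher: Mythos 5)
Your argument for the main (diameter-dependent) bound is correct and is essentially the paper's own proof: the same decomposition of $\Delta-\rho$ into a degree term plus per-edge AM--GM deficits, Lemma~\ref{lem2.1} to pass to squared differences of $y_i=x_i^{k/2}$, a shortest $u$--$v$ path with Cauchy--Schwarz, Lemma~\ref{lem2.2}, and $x_u^k>1/n$ from nonregularity. The only cosmetic difference is how the per-edge constant $\tfrac{k}{2}$ is extracted: you use the variance identity $\sum_{p<q\in e}(y_p-y_q)^2=k\sum_{i\in e}(y_i-\bar y_e)^2\geqslant\tfrac{k}{2}(y_a-y_b)^2$, while the paper routes two-step detours $u_{i-1}\to w\to u_i$ through the $k-2$ interior vertices of each path edge; both give the same $\tfrac{k}{2r}(x_u^{k/2}-x_v^{k/2})^2$ for a path of length $r$, hence the same final bound.

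Parts (1) and (2) are where your proposal has a genuine gap, and you flag it yourself: Cauchy--Schwarz along each of the $f$ edge-disjoint paths leaves the path length $r$ in the denominator, whereas the stated bounds require the length-free estimate $\sum_{i,j\in e\in E(P)}(y_i-y_j)^2\geqslant\tfrac{k}{2}(y_u-y_v)^2$ for each single path (respectively $\geqslant\tfrac{3}{2}(y_u-y_v)^2$ when $k=4$). You do not supply this step, and your proposed repairs (geometric decay of eigenvector entries, counting disjoint vertex pairs inside an edge) are neither developed nor what the paper does. The paper's route is the refined one-path bound $\sum_{i,j\in e\in E(P)}(y_i-y_j)^2\geqslant\tfrac{2+(k-2)^r}{2r}(y_u-y_v)^2$, the exponential arising from applying Cauchy--Schwarz to each of the $(k-2)^r$ chains obtained by choosing one interior vertex per edge, followed by the observation that $\tfrac{2+(k-2)^r}{2r}\geqslant\tfrac{k}{2}$ for $k\geqslant5$ and $\geqslant\tfrac{3}{2}$ for $k=4$. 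Be aware, however, that your worry about the $1/r$ loss is well founded: in that summation a pair such as $\{u_i,v_{i+1}\}$ is counted $(k-2)^{r-1}$ times although it occurs only once in the left-hand sum, so an honest count gives only $\tfrac{2+(k-2)}{2r}=\tfrac{k}{2r}$. In other words, the step you could not reproduce is precisely the step at which the paper's own argument for (1) and (2) is questionable; as written, neither your proposal nor the paper's proof closes those two refinements, while the main bound is sound in both.
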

\begin{proof}
It is known \cite{Qi14} that $\rho(G)$ is an eigenvalue of $G$ with a positive eigenvector $x$. We choose $x$ such that $\sum_{i=1}^nx_i^k=1$. Let $u,v$ be two vertices such that $x_u=\max_{i\in V(G)}x_i$ and $x_v=\min_{i\in V(G)}x_i$. By $\sum_{i=1}^nx_i^k=1$, we get $x_u^k>\frac{1}{n}>x_v^k$ ($G$ is regular when $x_u^k=x_v^k=\frac{1}{n}$). By (2.1), we have
\begin{eqnarray*}
\Delta-\rho(G)&=&\Delta\sum_{i=1}^nx_i^k-k\sum_{e\in E(G)}x^e=\sum_{i=1}^n(\Delta-d_i)x_i^k+\sum_{i=1}^nd_ix_i^k-k\sum_{e\in E(G)}x^e\\
&=&\sum_{i=1}^n(\Delta-d_i)x_i^k+\sum_{e=i_1\cdots i_k\in E(G)}(x_{i_1}^k+\cdots+x_{i_k}^k-kx^e),
\end{eqnarray*}
where $d_i$ is the degree of the vertex $i$. Since $x_u^k>x_v^k>0$, by Lemma \ref{lem2.1}, we have
\begin{align}
\Delta-\rho(G)>(n\Delta-km)x_v^k+\frac{1}{k-1}\sum_{i,j\in e\in E(G)}(x_i^{\frac{k}{2}}-x_j^{\frac{k}{2}})^2.\tag{3.1}
\end{align}
Let $P:u=u_0,e_1,u_1,\ldots,u_{r-1},e_r,u_r=v$ be a path from $u$ to $v$, where $e_i$ is an edge containing vertices $u_{i-1}$ and $u_i$. Then
\begin{eqnarray*}
\sum_{i,j\in e\in E(P)}(x_i^{\frac{k}{2}}-x_j^{\frac{k}{2}})^2\geqslant\sum_{i=0}^{r-1}(x_{u_i}^{\frac{k}{2}}-x_{u_{i+1}}^{\frac{k}{2}})^2+
\sum_{v_i\in e_i\backslash\{u_{i-1},u_i\}}\sum_{i=0}^{r-1}[(x_{u_i}^{\frac{k}{2}}-x_{v_{i+1}}^{\frac{k}{2}})^2+(x_{v_{i+1}}^{\frac{k}{2}}-x_{u_{i+1}}^{\frac{k}{2}})^2].
\end{eqnarray*}
It follows from the Cauchy-Schwarz inequality that
\begin{eqnarray*}
\sum_{i,j\in e\in E(P)}(x_i^{\frac{k}{2}}-x_j^{\frac{k}{2}})^2&\geqslant&\frac{1}{r}\left(\sum_{i=0}^{r-1}(x_{u_i}^{\frac{k}{2}}-x_{u_{i+1}}^{\frac{k}{2}})\right)^2+\sum_{v_i\in e_i\backslash\{u_{i-1},u_i\}}\frac{1}{2r}\left(\sum_{i=0}^{r-1}(x_{u_i}^{\frac{k}{2}}-x_{u_{i+1}}^{\frac{k}{2}})\right)^2\\
&=&\frac{1}{r}(x_u^{\frac{k}{2}}-x_v^{\frac{k}{2}})^2+\frac{(k-2)^r}{2r}(x_u^{\frac{k}{2}}-x_v^{\frac{k}{2}})^2=\frac{2+(k-2)^r}{2r}(x_u^{\frac{k}{2}}-x_v^{\frac{k}{2}})^2.
\end{eqnarray*}
So
\begin{align}
\sum_{i,j\in e\in E(P)}(x_i^{\frac{k}{2}}-x_j^{\frac{k}{2}})^2\geqslant\frac{2+(k-2)^r}{2r}(x_u^{\frac{k}{2}}-x_v^{\frac{k}{2}})^2\geqslant\frac{k}{2r}(x_u^{\frac{k}{2}}-x_v^{\frac{k}{2}})^2.\tag{3.2}
\end{align}
There is a shortest path from $u$ to $v$ whose length does not exceed the diameter $D$. By (3.1) and (3.2), we have
\begin{eqnarray*}
\Delta-\rho(G)&>&(n\Delta-km)x_v^k+\frac{k}{2(k-1)D}(x_u^{\frac{k}{2}}-x_v^{\frac{k}{2}})^2.
\end{eqnarray*}
The right side of the above inequality is a quadratic function of $x_v^{\frac{k}{2}}$. By Lemma \ref{lem2.2}, we get
\begin{eqnarray*}
\Delta-\rho(G)>\frac{k(n\Delta-km)}{2(k-1)D(n\Delta-km)+k}x_u^k.
\end{eqnarray*}
Since $x_u^k>\frac{1}{n}$, we get
\begin{eqnarray*}
\Delta-\rho(G)>\frac{k(n\Delta-km)}{n\left(2(k-1)D(n\Delta-km)+k\right)}.
\end{eqnarray*}
Next we consider the cases of $k\geqslant5$ and $k=4$.

\textbf{Case 1. $k\geqslant5$ and $G$ is $f$-edge-connected.} In this case, $\frac{2+(k-2)^r}{2r}\geqslant\frac{k}{2}$. By (3.2), we have
\begin{align}
\sum_{i,j\in e\in E(P)}(x_i^{\frac{k}{2}}-x_j^{\frac{k}{2}})^2\geqslant\frac{k}{2}(x_u^{\frac{k}{2}}-x_v^{\frac{k}{2}})^2.\tag{3.3}
\end{align}
Since $G$ is $f$-edge-connected, by Lemma \ref{lem2.3}, there are $f$ mutual edge-disjoint paths between $u$ and $v$. By (3.1) and (3.3), we have
\begin{eqnarray*}
\Delta-\rho(G)>(n\Delta-km)x_v^k+\frac{fk}{2(k-1)}(x_u^{\frac{k}{2}}-x_v^{\frac{k}{2}})^2.
\end{eqnarray*}
By Lemma \ref{lem2.2}, we get
\begin{eqnarray*}
\Delta-\rho(G)>\frac{fk(n\Delta-km)}{2(k-1)(n\Delta-km)+fk}x_u^k>\frac{fk(n\Delta-km)}{n[2(k-1)(n\Delta-km)+fk]}.
\end{eqnarray*}

\textbf{Case 2. $k=4$ and $G$ is $f$-edge-connected.} In this case, $\frac{2+(k-2)^r}{2r}\geqslant\frac{3}{2}$. By (3.2), we have
\begin{align}
\sum_{i,j\in e\in E(P)}(x_i^2-x_j^2)^2\geqslant\frac{3}{2}(x_u^2-x_v^2)^2.\tag{3.4}
\end{align}
Since $G$ is $f$-edge-connected, by Lemma \ref{lem2.3}, there are $f$ mutual edge-disjoint paths between $u$ and $v$. By (3.1) and (3.4), we have
\begin{eqnarray*}
\Delta-\rho(G)>(n\Delta-4m)x_v^4+\frac{f}{2}(x_u^2-x_v^2)^2.
\end{eqnarray*}
By Lemma \ref{lem2.2}, we get
\begin{eqnarray*}
\Delta-\rho(G)>\frac{f(n\Delta-4m)}{2(n\Delta-4m)+f}x_u^4>\frac{f(n\Delta-4m)}{n[2(n\Delta-4m)+f]}.
\end{eqnarray*}
\end{proof}
\noindent
\textbf{Remark 1.} Take $k=2$ in Theorem 3.1, we can obtain the inequality (1.1).

\begin{cor}
Let $G$ be a nonregular connected $k$-graph. If $k\geqslant4$, then
\begin{eqnarray*}
\Delta-\rho(G)>\frac{1}{3n}.
\end{eqnarray*}
\end{cor}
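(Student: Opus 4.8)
The plan is to derive the corollary directly from the edge-connectivity refinements (1) and (2) of Theorem 3.1 rather than from its leading (diameter-dependent) estimate, since only the former are free of $D$ and can therefore produce a bound that is constant in $n$. Two preliminary observations do all the work. First, writing $t=n\Delta-km=\sum_{i=1}^n(\Delta-d_i)$ (using $\sum_i d_i=km$), nonregularity forces $t\geqslant1$: each summand $\Delta-d_i$ is a nonnegative integer and at least one is positive. Second, a connected hypergraph is trivially $1$-edge-connected, because the only edge subset $U$ with $|U|<1$ is $U=\varnothing$ and $G-\varnothing=G$ is connected; hence I may apply parts (1) and (2) of Theorem 3.1 with $f=1$.

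For $k\geqslant5$ I would invoke Theorem 3.1(1) with $f=1$, giving $\Delta-\rho(G)>\frac{kt}{n[2(k-1)t+k]}$, so it remains to verify the elementary inequality $\frac{kt}{2(k-1)t+k}\geqslant\frac13$. Clearing denominators, this is equivalent to $t(k+2)\geqslant k$, which holds for every $t\geqslant1$. For $k=4$ I would invoke Theorem 3.1(2) with $f=1$, giving $\Delta-\rho(G)>\frac{t}{n[2t+1]}$, and here the required inequality $\frac{t}{2t+1}\geqslant\frac13$ reduces simply to $t\geqslant1$. In both cases, combining $t\geqslant1$ with the strict inequality already furnished by Theorem 3.1 yields $\Delta-\rho(G)>\frac{1}{3n}$, as claimed.

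There is no genuine analytic obstacle here; the entire content lies in choosing the correct input from Theorem 3.1. The crux is recognizing that the $D$-free estimates must be used: the theorem's first bound behaves like $\frac{k}{2n(k-1)D}$ for large diameter and so cannot give a lower bound independent of $D$. The second key point is that integrality of $n\Delta-km$ pins the worst case at $t=1$, which is precisely where the constant $\frac13$ is attained—and attained sharply in the $k=4$ estimate, explaining why $\frac{1}{3n}$ (and not something larger) is the natural threshold. The only points needing care in the write-up are the justifications that $t\geqslant1$ and that connectivity suffices for $f=1$; both are immediate but should be stated explicitly.
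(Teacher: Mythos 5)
Your proposal is correct and follows essentially the same route as the paper: both apply the $f$-edge-connectivity bounds of Theorem 3.1 with $f=1$ (valid since every connected $k$-graph is $1$-edge-connected) and then reduce to the elementary inequalities $\frac{t}{2t+1}\geqslant\frac{1}{3}$ for $k=4$ and $\frac{kt}{2(k-1)t+k}>\frac{1}{3}$ for $k\geqslant5$, using $t=n\Delta-km\geqslant1$. Your explicit justification of the integrality argument for $t\geqslant1$ is a welcome detail the paper leaves implicit, but the argument is the same.
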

\begin{proof}
All connected $k$-graphs are $1$-edge-connected. If $k=4$, then by Theorem 3.1, we have
\begin{eqnarray*}
\Delta-\rho(G)>\frac{n\Delta-4m}{n[2(n\Delta-4m)+1]}\geqslant\frac{1}{3n}.
\end{eqnarray*}
If $k\geqslant5$, then by Theorem 3.1, we have
\begin{eqnarray*}
\Delta-\rho(G)>\frac{k(n\Delta-km)}{n[2(k-1)(n\Delta-km)+k]}>\frac{1}{3n}.
\end{eqnarray*}
\end{proof}

A $k$-graph $G$ is called \textit{odd-bipartite}, if there exists a proper subset $V_1$ of $V(G)$ such that each edge of $G$ contains exactly odd number of vertices in $V_1$ \cite{Zhou}. Clearly, odd-bipartite $2$-graphs are ordinary bipartite graphs. For a connected nonbipartite graph $G$ with $n$ vertices, Alon and Sudakov \cite{Alon} proved that $\Delta+\mu(G)\geqslant\frac{1}{n(D+1)}$, where $\Delta$ is the maximum degree of $G$, $D$ is the diameter of $G$. We consider similar problem for connected non-odd-bipartite $k$-graphs as follows.
\begin{thm}
Let $G$ be a connected non-odd-bipartite $k$-graph and $k\geqslant4$ is even. Then
\begin{eqnarray*}
\Delta+\mu(G)>\frac{1}{3n}.
\end{eqnarray*}
\end{thm}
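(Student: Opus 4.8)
My plan is to argue uniformly from the minimum H‑eigenvector, without splitting into the regular and nonregular cases, and to invoke non‑odd‑bipartiteness only to locate one convenient edge. Since $k$ is even, Lemma~\ref{lem2.4} lets me fix a real eigenvector $x$ with $\sum_{i=1}^n x_i^k=1$ and $\mu(G)=x^\top(\mathcal{A}_Gx^{k-1})=k\sum_{e\in E(G)}x^e$. Writing $z_i=|x_i|$ and $z^e=|x^e|$, and mirroring the opening computation of Theorem~3.1, I would first record the identity
\begin{align*}
\Delta+\mu(G)=\sum_{i=1}^n(\Delta-d_i)x_i^k+\sum_{e=i_1\cdots i_k\in E(G)}\big(x_{i_1}^k+\cdots+x_{i_k}^k+kx^e\big),
\end{align*}
in which every summand is nonnegative: $\Delta-d_i\geq0$, and for each edge the term $T_e:=x_{i_1}^k+\cdots+x_{i_k}^k+kx^e\geq\sum_{j}z_{i_j}^k-kz^e\geq0$ by the AM--GM inequality underlying Lemma~\ref{lem2.1}.

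The sole role of non‑odd‑bipartiteness is to produce an edge $e_0$ with $x^{e_0}\geq0$. Indeed, if every edge had $x^e<0$, then each edge would contain an odd number of vertices on which $x$ is negative, and $V_1=\{i:x_i<0\}$ would exhibit $G$ as odd‑bipartite; here $V_1$ is a proper subset because $k$ is even (an all‑negative edge would give $x^e=\prod|x_i|>0$), a contradiction. The payoff is that for such an edge the \emph{whole} edge‑term dominates a single $k$‑th power, $T_{e_0}=\sum_{i\in e_0}z_i^k+kx^{e_0}\geq z_p^k$ for every $p\in e_0$ — a zeroth‑order quantity, not merely a sum of squared differences (this is exactly where the even/"odd‑bipartite" distinction bites, since for a negative‑product edge $T_e$ can vanish).

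To reach $\frac{1}{3n}$ I would let $w$ be a vertex with $z_w=\max_i z_i$, so $z_w^k\geq\frac1n$, and let $p$ be the vertex of $e_0$ nearest to $w$, joined to $w$ by a shortest path $P$ of length $r\leq D$ whose edges are distinct from $e_0$. Discarding all edge‑terms except $T_{e_0}$ and those along $P$, and bounding the latter exactly as in inequality (3.2) applied to $|x|$, I obtain
\begin{align*}
\Delta+\mu(G)\geq z_p^k+\frac{1}{k-1}\cdot\frac{2+(k-2)^r}{2r}\big(z_w^{k/2}-z_p^{k/2}\big)^2.
\end{align*}
The decisive feature — the same one that removes the diameter in Corollary~3.2 — is that $\frac{2+(k-2)^r}{2r}$ has an $r$‑independent lower bound, namely $\frac k2$ for $k\geq5$ and $\frac32$ for $k=4$. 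Minimizing the resulting quadratic in $z_p^{k/2}\in[0,z_w^{k/2}]$ then gives $\Delta+\mu(G)\geq\frac{k}{3k-2}\,z_w^k>\frac{1}{3n}$ for $k\geq5$, and $\Delta+\mu(G)\geq\frac13 z_w^k\geq\frac{1}{3n}$ for $k=4$, the latter being strict since $z_p^{k/2}=\tfrac13 z_w^{k/2}$ and $z_w^k=\tfrac1n$ cannot hold simultaneously.

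The main obstacle is not any individual estimate but the bookkeeping that makes the diameter disappear. One must route the path to the \emph{nearest} vertex of $e_0$, both so that $P$ and $e_0$ are edge‑disjoint (no $T_e$ counted twice) and so that the trade‑off works: bounding $T_{e_0}$ crudely by $z_p^k$ lets the even edge absorb the case $z_p$ large, while the $r$‑free form of (3.2) absorbs the case $z_p$ small. Verifying that this quadratic trade‑off clears $\frac{1}{3n}$ for every even $k\geq4$, and confirming strictness at $k=4$, is the delicate point; everything else reuses the machinery already built for Theorem~3.1.
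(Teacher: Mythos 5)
Your argument is correct and takes a genuinely different route from the paper's. The paper proves Theorem 3.3 by reduction: it first disposes of $\mu(G)=0$ and of nonregular $G$ (via Corollary 3.2), and then, for regular $G$ with $\mu(G)<0$, finds an edge $f$ with $|f\cap V_1|$ even, deletes it, shows $\mu(G)\geqslant k\sum_{e\in E(H)}x^e\geqslant\mu(H)$ for the nonregular hypergraph $H=G-f$ using Lemma \ref{lem2.4}, and finally applies Corollary 3.2 to $H$ or to the component of $H$ carrying $\rho(H)$. You instead work directly with a minimizer $x$ of the Rayleigh quotient, split $\Delta+\mu(G)$ into nonnegative vertex terms and edge terms $T_e$, invoke non-odd-bipartiteness only to produce one edge $e_0$ with $x^{e_0}\geqslant0$ (the same combinatorial fact the paper uses, but exploited in place via $T_{e_0}\geqslant z_p^k$ rather than by deletion), and rerun the path/Cauchy--Schwarz/Lemma \ref{lem2.2} machinery of Theorem 3.1 on $|x|$ along a shortest path from the maximal vertex to $e_0$; your routing to the \emph{nearest} vertex of $e_0$ correctly guarantees edge-disjointness from $e_0$, and your strictness check at $k=4$ (equality in Lemma \ref{lem2.2} forces $z_p^{k/2}=\tfrac13 z_w^{k/2}$ while $z_w^k=\tfrac1n$ forces $z_p=z_w$) is sound. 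What your version buys: no regular/nonregular case split, no monotonicity step $\mu(G)\geqslant\mu(H)$, no Perron--Frobenius input for $H$, and no component analysis when $H$ disconnects; what the paper's version buys is brevity, since Corollary 3.2 is already on the shelf. Be aware that both proofs lean with equal weight on the $r$-independent lower bound for $\frac{2+(k-2)^r}{2r}$ extracted from (3.2) --- yours directly, the paper's through Corollary 3.2 --- so your proof is exactly as solid as that coefficient (whose derivation via the $(k-2)^r$ count of detour tuples deserves independent scrutiny), and no more elementary at that one critical point.
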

\begin{proof}
It is known that $0$ is an H-eigenvalue of $G$ when $k\geqslant3$ \cite{Qi14}. So $\mu(G)\leqslant0$. If $\mu(G)=0$, then $\Delta+\mu(G)>\frac{1}{3n}$. If $G$ is nonregular, then by Corollary 3.2, we have $\Delta+\mu(G)\geqslant\Delta-\rho(G)>\frac{1}{3n}$. So we suppose that $G$ is regular and $\mu(G)<0$.

Let $x$ be a real eigenvector of $\mu(G)$ satisfying $\sum_{i=1}^nx_i^k=1$, and let $V_1=\{i:x_i<0\}$ and $V_2=\{i:x_i\geqslant0\}$. Since $\mu(G)<0$, by (2.1), $V_1$ is nonempty. If $V_2$ is empty, then $-x$ is a positive eigenvector of $\mu(G)$, a contradiction to the fact that $\rho(G)$ is the unique H-eigenvalue with a positive eigenvector (see \cite{Qi14}). Hence $V_1$ and $V_2$ are nonempty. Since $G$ is not odd-bipartite, it has an edge $f$ such that $|f\cap V_1|$ is even. Let $H=G-f$ be the $k$-graph obtained from $G$ by deleting the edge $f$. Since $G$ is regular and non-odd-bipartite, $H$ is nonregular and $|E(H)|\geqslant2$. By (2.1) and Lemma \ref{lem2.4}, we have
\begin{eqnarray*}
\mu(G)=k\sum_{e\in E(G)}x^e\geqslant k\sum_{e\in E(H)}x^e\geqslant\mu(H).
\end{eqnarray*}
Hence
\begin{eqnarray*}
\Delta+\mu(G)\geqslant\Delta+\mu(H)\geqslant\Delta-\rho(H).
\end{eqnarray*}
If $H$ is connected, then by Corollary 3.2, we have $\Delta+\mu(G)\geqslant\Delta-\rho(H)>\frac{1}{3n}$. If $H$ is disconnected, then there exists a component $H_1$ of $H$ such that $\rho(H)=\rho(H_1)$. Since $|E(H)|\geqslant2$ and $G$ is a connected regular $k$-graph, $H_1$ is nonregular. By Corollary 3.2, we have $\Delta+\mu(G)\geqslant\Delta-\rho(H)=\Delta-\rho(H_1)>\frac{1}{3n}$.
\end{proof}

\noindent
\textbf{Remark 2.} If $G$ is a connected odd-bipartite $k$-graph and $k$ is even, then $\mu(G)=-\rho(G)$ \cite{ShaoShanWu}. In this case, $\Delta+\mu(G)=\Delta-\rho(G)$.

\vspace{3mm}
\noindent
\textbf{Acknowledgements.}

\vspace{3mm}
This work is supported by the National Natural Science Foundation of China (No. 11371109 and No. 11426075), the Natural Science Foundation of the Heilongjiang Province (No. QC2014C001) and the Fundamental Research Funds for the Central Universities.

\vspace{3mm}
\noindent
\textbf{References}

\end{spacing}
\end{document}